\def\newaliasedtheorem#1[#2]#3{
	\newaliascnt{#1@alt}{#2}
	\newtheorem{#1}[#1@alt]{#3}
	\expandafter\newcommand\csname #1@altname\endcsname{#3}
}
\numberwithin{equation}{section}
\newtheoremstyle{slanted}{\topsep}{\topsep}{\slshape}{}{\bfseries}{.}{.5em}{}
\theoremstyle{plain}
\newtheorem{theorem}{Theorem}[section]
\theoremstyle{definition}
\theoremstyle{remark}
\newcommand{\setN}{\mathbb{N}}
\newcommand{\setR}{\mathbb{R}}
\newcommand{\setZ}{\mathbb{Z}}
\let\altphi\phi
\let\phi\varphi
\let\varphi\altphi
\let\altphi\undefined
\newcommand{\diam}{{\rm diam}}
\DeclareMathOperator{\Ric}{Ric}
\newcommand{\dist}{\mathsf{d}}
\newfont{\tmpf}{cmsy10 scaled 2500}
\def\XXint#1#2#3{{\setbox0=\hbox{$#1{#2#3}{\int}$ }
		\vcenter{\hbox{$#2#3$ }}\kern-.6\wd0}}
\begin{document}

\title{Stability of Tori under Lower Sectional Curvature}

\author{Elia Bru\`e, Aaron Naber and Daniele Semola}

\maketitle

\begin{abstract}
	Let $(M^n_i, g_i)\stackrel{GH}{\longrightarrow} (X,\dist_X)$ be a Gromov-Hausdorff converging sequence of Riemannian manifolds with ${\rm Sec}_{g_i} \ge -1$,  ${\rm diam}\, (M_i)\le D$, and such that the $M^n_i$ are all homeomorphic to tori $T^n$.  Then $X$ is homeomorphic to a $k$-dimensional torus $T^k$ for some $0\leq k\leq n$.  This answers a question of Petrunin in the affirmative.  We show this result is false if the $M^n_i$ are homeomorphic tori which are only assumed to be Alexandrov spaces.  When $n=3$, we prove the same tori stability under the weaker condition ${\rm Ric}_{g_i} \ge -2$. 
\end{abstract}

\section{Introduction}

The study of collapsing Riemannian manifolds under different curvature constraints is a subject of profound interest \cite{Gromovalmostflat,FukayacollapsingI,FukayacollapsingII,Fukayaboundary,CheegerGromovI,CheegerGromovII,Fukayaorbifold,Yamaguchicoll,CheegerFukayaGromov} in geometric analysis.

\bigskip

In this paper we focus our attention on sequences of manifolds $(M_i^n,g_i)$ with uniform lower bounds on the sectional curvature and diameter bounds:

\begin{equation}\label{H}
	\begin{split}
		{\rm Sec}_{g_i} & \ge -1\, ,
		\\
		\diam (M_i) & \le D \, .
	\end{split}
\end{equation}
In this setting, Gromov-Hausdorff limits $(M^n_i, g_i)\stackrel{GH}{\longrightarrow} (X^k,\dist)$ are Alexandrov spaces of dimension $0\le k \le n$, curvature $\ge -1$ , and $\diam(X)\le D$.  It is also well-known that in many situations the topology of the limit space $(X^k,\dist)$ is closely tied to the topology of the approximating manifolds $M_i^n$.\\ 

In the non-collapsing setting, i.e. when $k=n$, Perelman proved that $X$ is homeomorphic to $M_i$ for $i$ large enough \cite{Perelmanstability}.
When $k<n$ and $X^k$ is a smooth Riemannian manifold without boundary, Yamaguchi \cite{Yamaguchicoll} proved for large $i$ that $M_i^n$ is the total space of a locally trivial fiber bundle with base space $X^k$.\\

However, the topology of collapsing sequences is not fully understood when the limits are singular or admit boundary points, even though some general local restrictions have been obtained by Kapovitch in \cite{Kapovitchcollapse}.  The main result of this paper establishes that when $M^n_i$ all are homeomorphic to tori $T^n$, then the limit must be homeomorphic to a torus, regardless of any additional assumption on its regularity.\\

\begin{theorem}\label{thm:torusstabilitysecintro}
Let $(M^n_i, g_i) \xrightarrow{GH} (X^k,\dist_X)$ satisfy ${\rm Sec}_{g_i} \ge -1$,  ${\rm diam}\, (M_i)\le D$, and such that the $M^n_i$ are all homeomorphic to tori $T^n$.  Then $X^k$ is homeomorphic to the $k$-dimensional torus $T^k$ for some $0\le k\le n$.
\end{theorem}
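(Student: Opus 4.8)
\smallskip
\noindent\textbf{Proof plan.}
I would argue by induction on $n$; for $n=0$ there is nothing to prove. If $k=n$ the convergence is non-collapsed and Perelman's stability theorem~\cite{Perelmanstability} gives at once that $X^n$ is homeomorphic to $M_i^n\cong T^n$ for $i$ large. So assume $k<n$, and that the statement holds in all dimensions below $n$.

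\smallskip
In the collapsed regime the plan is to pass to the Riemannian universal covers $(\widetilde M_i,\tilde g_i)=(\setR^n,\tilde g_i)$, which still satisfy ${\rm Sec}_{\tilde g_i}\ge -1$, are simply connected, and carry free, cocompact, isometric actions of the deck groups $\Gamma_i\cong\setZ^n$ with fundamental domains of diameter $\le D$. Fixing basepoints and passing to a subsequence, one takes the equivariant pointed Gromov--Hausdorff limit $(\widetilde M_i,\tilde g_i,\Gamma_i)\to(Y,y,G)$: here $Y$ is a complete Alexandrov space of curvature $\ge -1$, $G\le{\rm Isom}(Y)$ is closed and acts cocompactly, $Y/G$ is isometric to $X$, and $G$ is abelian since the commutator relations of the $\Gamma_i\cong\setZ^n$ pass to the limit. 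Since $Y$ is a limit of simply connected spaces with a uniform lower curvature bound, the Alexandrov analogue of Sormani--Wei's universal-cover theory shows $Y$ is simply connected, so that $Y$ plays the role of the (orbifold) universal cover of $X$.

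\smallskip
By the Fukaya--Yamaguchi structure theory the identity component $G^0$ is a connected nilpotent --- hence, being a subgroup of the abelian $G$, abelian --- Lie group, while $G/G^0$ is finitely generated abelian and, using cocompactness and the absence of torsion in $\setZ^n$, isomorphic to $\setZ^b$ for some $b$. Quotienting by $G^0$, the space $Y_1:=Y/G^0$ is a $k$-dimensional Alexandrov space of curvature $\ge -1$ on which $\setZ^b$ acts properly discontinuously and cocompactly with $Y_1/\setZ^b\cong X$; as an infinite group acting properly discontinuously has no fixed points --- hence no nontrivial stabilizers --- the action is automatically free, so $Y_1\to X$ is a regular $\setZ^b$-covering. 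It thus suffices to prove that $Y_1$ is homeomorphic to $\setR^k$: granting this, $X=\setR^k/\setZ^b$ is a closed aspherical topological $k$-manifold, which forces $b=k$ and $\pi_1(X)\cong\setZ^k$, whence $X$ is homeomorphic to $T^k$ by topological rigidity of the torus (classical for $k\le 3$, Freedman--Quinn for $k=4$, Hsiang--Wall for $k\ge 5$), completing the induction.

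\smallskip
The hard part --- the reason the theorem requires work --- is to show that $Y_1$, equivalently $X$, is a contractible topological manifold without boundary; once this is in hand, a contractible topological $k$-manifold admitting a cocompact group action is simply connected at infinity and therefore homeomorphic to $\setR^k$. By Perelman's conical-neighbourhood theorem for Alexandrov spaces, being a manifold amounts to every iterated space of directions of $X$ being a sphere; these links are collapsed Gromov--Hausdorff limits of round spheres $S^{n-1}$ --- themselves topological manifolds, by an inductive use of Perelman stability --- and one would feed the inductive hypothesis together with the abelian $\setZ^k$-symmetry of $X$ into the analysis to exclude non-spherical links such as $\setC P^{2}$. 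The absence of boundary would follow since a boundary point of $X$, lifted to $Y_1$ and fed into Perelman's doubling theorem, produces an element of order two in a group injecting into $\setZ^n$, a contradiction. Contractibility of $Y_1$ is the remaining point, where the curvature lower bound together with the cocompact isometric $\setZ^k$-action must be used to rule out non-aspherical limits, such as covers of $T^k\#\setC P^{2}$. Assembling these ingredients --- localization, induction on dimension, and an essential use of the torus topology of the $M_i$ --- is the crux of the proof.
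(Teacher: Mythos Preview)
Your outline has a genuine gap at its core: you never establish that the limit $Y$ of the universal covers is a topological manifold, let alone homeomorphic to $\setR^n$. Everything downstream --- the structure of $G$, the topology of $Y_1=Y/G^0$, the claim that $X$ is a manifold --- hinges on this, and you essentially admit as much when you call it ``the hard part'' and offer only heuristics (iterated links should be spheres, doubling rules out boundary, contractibility ``is the remaining point''). None of these are arguments, and the proposed induction on $n$ does not help: the spaces of directions of $X$ are limits of \emph{spheres}, not of lower-dimensional tori, so the inductive hypothesis is inapplicable.

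The paper's proof avoids this difficulty by a device you are missing entirely. One first selects intermediate sublattices $\Lambda_i<\Gamma_i\cong\setZ^n$ (via a uniform lattice-covering lemma of Kloeckner--Sabourau) so that the quotients $\tilde M_i/\Lambda_i$ are tori with \emph{uniformly bounded diameter and uniformly positive volume} (the latter from Gromov's noncollapsing of aspherical universal covers). Perelman stability then applies to these \emph{noncollapsed} tori, forcing their limit to be homeomorphic to $T^n$ and hence $\tilde X\cong\setR^n$, equivariantly with respect to a $\setZ^n$-action. This is what makes the rest of the argument work.

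Two further errors follow from missing this step. First, your assertion that $G/G^0\cong\setZ^b$ ``using\ldots the absence of torsion in $\setZ^n$'' is unjustified: limits of torsion-free group actions can acquire compact subgroups, and likewise $G^0$ could a priori contain a torus factor. The paper rules out all compact subgroups of $G$ via Newman's theorem applied to the $\setZ^n$-equivariant homeomorphism $\tilde X\cong\setR^n$ obtained above --- a step that is simply unavailable to you. Second, the claim that ``an infinite group acting properly discontinuously has no fixed points'' is false (consider $\setZ/2\ast\setZ/2$ on $\setR$); what you need is torsion-freeness of $G/G^0$, which again you have not established.
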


We can extend or provide counterexamples to extensions of the above in various contexts.  If we just assume that $(M^n_i,\dist_i)$ are all Alexandrov spaces homeomorphic to $T^n$ with ${\rm curv} \ge -1$ and  ${\rm diam}\, (M_i)\le D$, then for $n\leq 4$ we can still prove that $X^k$ is homeomorphic to $T^k$ and we fully recover the conclusion of \autoref{thm:torusstabilitysecintro}.  For $n\geq 5$ the situation becomes more subtle.  We can show in this case that $X^k$ is at least homotopic to a torus $T^k$, and an example suggested to the authors by Vitali Kapovitch shows that this is sharp.  In Section \autoref{subsec:counterAlexandrov} we will show all of this, and in particular provide an example of Alexandrov spaces $M^5_i\to X^4$ where the $M^5_j$ are all homeomorphic tori but the limit $X^4$ is not.\\

The framework presented in \autoref{thm:torusstabilitysecintro} has garnered recent attention, as Zamora in \cite{Zamora} studied sequences of tori $(M_i^n, g_i)$ that satisfy condition \eqref{H} and demonstrated that they cannot converge to an interval $[0, L]$, thus covering the case when $n$ is arbitrary and $k=1$ in our main result. The very same result had been previously established by Katz in \cite{Katz20}, with a more elementary argument, under the additional assumption $n=2$. We also remark that the case when $n=3$ in our main result could be handled with the general theory of collapse under a sectional curvature lower bound in dimension 3, as developed by Shioya and Yamaguchi in \cite{ShioyaYamaguchi}.\footnote{See the discussion in https://mathoverflow.net/questions/236001/gromov-hausdorff-limits-of-2-dimensional-riemannian-surfaces} Furthermore, in the same paper, Zamora mentions the conjecture, due to Petrunin (see also \cite{Zamora2}), that any limit in this setting must necessarily be homeomorphic to a torus. Our result, as presented in \autoref{thm:torusstabilitysecintro}, confirms this conjecture.

\bigskip

Let us briefly comment the interplay between the topological assumption $M^n_i \cong T^n$ and the metric assumption \eqref{H} in \autoref{thm:torusstabilitysecintro}.

First, if we strengthen the sectional curvature condition to ${\rm Sec}_{g_i} \ge 0$, a rigidity phenomenon occurs: any torus in the sequence must be flat.  For this rigidity it is actually enough to assume ${\rm Scal}_{g_i} \ge 0$, as proven first by Schoen and Yau when $n\le 7$ \cite{SchoenYau}, and then in general dimensions by Gromov and Lawson \cite{GromovLawson}. A Gromov-Hausdorff limit of a sequence of flat tori must be a flat torus, by Mahler's compactness theorem \cite{Mahler46}.
When nonnegative sectional is weakened to almost nonnegative Ricci curvature, then an ``almost rigidity'' occurs and the universal cover is almost Euclidean (see \cite{Colding}).

However, under the general lower curvature bound condition \eqref{H}, we do not encounter the same rigidity as described above. This, in fact, constitutes the primary novel aspect of our theorem: the interaction between the topological assumption $M^n_i \cong T^n$ and the lower bound on the sectional curvature still controls the topology of limits, even though we are out of the rigidity regime.

In essence, this is possible because $M^n_i \cong T^n$ implies uniform volume non-collapsing of the universal covers $(\tilde M_i^n, \tilde g_i)$. Consequently, by selecting a suitable intermediate covering, we can apply Perelman's stability theorem, which ensures a uniform control over the topology of the universal covers.
Then, a careful argument is needed to gain uniform control on the action of $\pi_1(M_i)$ on $\tilde M_i\cong \mathbb{R}^n$. The application of Perelman's stability theorem mentioned above (and of a subsequent refinement due to Kapovitch \cite{Kapovitchlimits}), are the main steps where the lower sectional curvature bound plays a key role. We believe that the remaining parts of the argument could be adapted to the case of lower Ricci curvature bounds

\bigskip

We finally pose an open question about sequence of tori satisfying a lower bound on the Ricci curvature:
\begin{equation}\label{H2}
	\begin{split}
		{\rm Ric}_{g_i} & \ge -(n-1)\, ,
		\\
		\diam (M_i) & \le D \, .
	\end{split}
\end{equation}

\begin{question}\label{question}
	Let $(M^n_i, g_i) \xrightarrow{GH} (X^k,\dist_X)$ satisfy \eqref{H2}, where $k$ is the rectifiable dimension of $X$. Assume that $M^n_i$ are all homeomorphic to tori $T^n$. Is $X^k$ homeomorphic to $T^k$?	
\end{question}

The answer to the above question is affirmative when $n=3$.

\begin{theorem}\label{thm:n=3}
	Let $(M^3_i, g_i) \xrightarrow{GH} (X^k,\dist_X)$ satisfy ${\rm Ric}_{g_i} \ge -2$,  ${\rm diam}\, (M_i)\le D$, and such that the $M^3_i$ are all homeomorphic to tori $T^3$.  Then $X^k$ is homeomorphic to the $k$-dimensional torus $T^k$ for some $0\le k\le 3$.
\end{theorem}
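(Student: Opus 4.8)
The plan is to run the proof of \autoref{thm:torusstabilitysecintro} essentially unchanged, and to replace its one step that genuinely uses the lower \emph{sectional} bound — Perelman's stability theorem together with Kapovitch's equivariant refinement — by an argument special to dimension $3$ that needs only the lower \emph{Ricci} bound. Recall the structure of the proof of \autoref{thm:torusstabilitysecintro}. After passing to a subsequence, one lifts to universal covers and takes an equivariant pointed Gromov--Hausdorff limit $(\tilde M^3_i,\tilde g_i,\tilde p_i,\pi_1(M_i))\to(Y,\dist_Y,y,G)$ with $\pi_1(M_i)=\mathbb Z^3$ and $Y/G=X$; here $G$ is a Lie group (the isometry group of a Ricci limit space being a Lie group), and since $\mathbb Z^3$ is abelian and acts freely on the $\tilde M_i$, the identity component $G^0\cong\mathbb R^a$ is a vector group and the component group is finitely generated and abelian. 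The decisive metric input is that $M^3_i\cong T^3$ forces the universal covers $(\tilde M_i,\tilde g_i)$ to be \emph{uniformly non-collapsed} — the relevant lemma in the proof of \autoref{thm:torusstabilitysecintro}, whose proof uses only Bishop--Gromov volume comparison and so goes through under ${\rm Ric}\ge -2$ — so $\dim Y=3$.

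Next, exactly as in \autoref{thm:torusstabilitysecintro}, one writes $\mathbb Z^3=L_i\oplus K_i$ with $L_i$ (the saturation of) the subgroup generated by the loops through $\tilde p_i$ whose length tends to $0$, and $K_i\cong\mathbb Z^{3-a}$ a complement whose $\tilde p_i$-displacement stays bounded below by a fixed $\delta>0$. Put $\hat M_i:=\tilde M_i/K_i$. Then small balls of $\hat M_i$ are isometric to small balls of $\tilde M_i$, so $\hat M_i$ is uniformly non-collapsed; moreover $\hat M_i$ is aspherical with $\pi_1(\hat M_i)=K_i\cong\mathbb Z^{3-a}$ and universal cover $\mathbb R^3$, so by the classification of $3$-manifolds $\hat M_i\cong T^{3-a}\times\mathbb R^{a}$; and, along a subsequence, $\hat M_i\to\hat Y:=Y/\overline K$, a non-collapsed three-dimensional Ricci limit on which the residual group $\mathbb Z^3/K_i\cong\mathbb Z^a$ acts with $\hat Y/\overline L=X$, $\overline L$ the closed limit of that action.

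In \autoref{thm:torusstabilitysecintro}, Perelman's and Kapovitch's stability theorems enter precisely here, to give that $\hat Y$ is a topological $3$-manifold homeomorphic to $\hat M_i$ for large $i$, compatibly with the residual $\mathbb Z^a$-action. In the Ricci setting I would replace this by the structure theory of non-collapsed three-dimensional Ricci limit spaces: such a space is a topological $3$-manifold, and, via Ricci-flow smoothing of three-manifolds with Ricci curvature bounded from below (Simon, Simon--Topping, and refinements), the convergence $\hat M_i\to\hat Y$ is topologically stable; because the Ricci flow is natural under isometries, this upgrades to a statement compatible with the residual $\mathbb Z^a$-action, playing the role of Kapovitch's theorem in the sectional case. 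It then follows that $\hat Y\cong T^{3-a}\times\mathbb R^a$ and that $\overline L$ is an abelian Lie group acting freely and cocompactly with its identity component acting by translations, so $X=\hat Y/\overline L$ is a torus $T^k$ with $0\le k\le3$; this final passage is identical to the one in the proof of \autoref{thm:torusstabilitysecintro}.

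The crux — and the only place the three-dimensional hypothesis is used — is this substitute for Perelman--Kapovitch stability: one must know that non-collapsed three-dimensional Ricci limits are topological manifolds, that the convergence of the (non-compact) non-collapsed intermediate covers is topologically stable, and, most delicately, that the stability can be made equivariant for the residual torus-direction action. Granting that input, the remaining group-theoretic and homotopy-theoretic steps are exactly those of \autoref{thm:torusstabilitysecintro}. A secondary technical point is verifying that the shrinking-loops subgroup may be chosen a direct summand whose complement $K_i$ has uniformly large displacement, so that $\hat M_i$ is genuinely non-collapsed and the $3$-manifold classification applies; this is handled as in the sectional case.
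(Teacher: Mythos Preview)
Your plan has the right core idea --- replace Perelman's stability by Simon's Ricci-flow smoothing in dimension $3$ --- but your description of the proof of \autoref{thm:torusstabilitysecintro} does not match the paper, and the discrepancies create real difficulties. The paper does \emph{not} split $\mathbb Z^n=L_i\oplus K_i$ into shrinking and non-shrinking directions. Instead it uses the Kloeckner--Sabourau lattice theorem (\autoref{thm:less dense}) to produce a full-rank sublattice $\Lambda_i\cong\mathbb Z^n$ with uniform two-sided displacement bounds, so that the intermediate covers $\tilde M_i/\Lambda_i$ are \emph{compact} tori with bounded diameter and noncollapsed volume. Stability (Perelman, or \cite{Simon} in the Ricci case) is then applied \emph{non-equivariantly} to these compact quotients to get $\tilde X/\Lambda\cong T^n$, and the equivariant identification $(\tilde X,\Lambda)\cong(\mathbb R^n,\mathbb Z^n)$ follows simply by lifting the homeomorphism to universal covers via the Galois correspondence. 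No equivariant stability theorem is invoked, and no non-compact intermediate cover appears. Your route, by contrast, requires topological stability for the non-compact $\hat M_i$ together with an equivariant upgrade --- you flag this yourself as the most delicate point --- both of which the paper sidesteps entirely.

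There is also a second place where the sectional bound enters that you have misidentified. Kapovitch's contribution is not an ``equivariant refinement'' of stability; it is the regularity result \cite{Kapovitchlimits} used inside \autoref{prop:quotienttopman} (Step~4) to show that the quotient of a noncollapsed Alexandrov limit by a free isometric torus action is a topological manifold. Under only a Ricci bound that proposition is unavailable, and the paper substitutes the classical results of Bredon \cite{Bredon2} (see also \cite{Raymond}) on torus actions on topological $3$-manifolds. Your outline does not address this second substitution, and the shortcut you propose --- that $\overline L^0$ ``acts by translations'' on $\hat Y\cong T^{3-a}\times\mathbb R^a$ --- presupposes exactly the equivariant identification you are trying to establish. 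A related circularity: your early assertion that $G^0\cong\mathbb R^a$ simply because the approximating actions are free and abelian is not justified; limits of free actions need not be free, and ruling out a torus factor in $G^0$ is precisely what the paper accomplishes in Step~3 via Newman's theorem (\autoref{thm:boundedorbitstrivial}), \emph{after} having established $\tilde X\cong\mathbb R^n$.
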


To prove \autoref{thm:n=3}, we can follow the same approach as in \autoref{thm:torusstabilitysecintro} with two major modifications. First, we substitute the application of Perelman's stability theorem with the topological stability results presented in \cite{Simon}. Second, instead of utilizing \autoref{prop:quotienttopman}, we employ general results about Lie group actions of homeomorphisms on topological 3-manifolds from \cite{Bredon2}.  See \autoref{ss:Ricci_lower_bound_n=3} for more details.

\subsection*{Acknowledgment}
The first author would like to express gratitude for the financial support received from Bocconi University.  The last author is supported by FIM-ETH through a Hermann Weyl Instructorship.

The authors are grateful to Sergio Zamora and to the reviewer for carefully reading the note and for useful comments.  They are thankful to Vitali Kapovitch for suggesting the example discussed in \autoref{subsec:counterAlexandrov}.

\vspace{.5cm}

\section{Preliminary Results}

\vspace{.3cm}

\subsection{Noncollapsing of Aspherical Manifolds}

The noncollapsing statement below follows from the combination of $(E_4)$ in \cite[Appendix 1]{Gromovfilling} and \cite[Theorem 4.5.D']{Gromovfilling}. Notice indeed that the universal cover of a Riemannian torus is geometrically contractible, as observed at the beginning of  \cite[Section 4.5.D]{Gromovfilling}.  

\begin{theorem}[Noncollapsing of Aspherical Manifolds \cite{Gromovfilling}]\label{thm:covernoncollapsed}
If $(M^n,g)$ is homeomorphic to a closed aspherical manifold and satisfies ${\rm Ric}_g \ge -(n-1)$, $\diam (X)\le D$, then 
\begin{equation}
{\rm Vol}_g(B_1(\tilde{p}))\ge c(n,D)\, ,\quad \text{for any $\tilde{p}\in \tilde {M}$}\, ,
\end{equation}
where $\tilde M$ is the universal cover of $M$.
\end{theorem}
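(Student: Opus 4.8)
The plan is to reproduce Gromov's argument from \cite{Gromovfilling,Gromovvolume}; we use $\mathbb{Z}/2$‑coefficients throughout so that no orientability hypothesis is needed. Write $\Gamma=\pi_1(M)$ and let $\pi\colon\tilde M\to M$ be the universal cover with the pulled‑back metric $\tilde g$; then $\tilde M$ is complete, $\Ric_{\tilde g}\ge-(n-1)$, and $\Gamma$ acts on $\tilde M$ by isometries with $\tilde M/\Gamma=M$. We may assume $n\ge 1$, so that $\Gamma$ is infinite and $\tilde M$ is noncompact. Suppose, towards a contradiction, that $\mathrm{Vol}_{\tilde g}(B_1(\tilde p_0))<\eps$ for some $\tilde p_0\in\tilde M$, where $\eps=\eps(n,D)>0$ is to be fixed later. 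Since $\diam(M)\le D$, every $\Gamma$‑orbit is $D$‑dense in $\tilde M$, so for an arbitrary $\tilde q\in\tilde M$ we may pick $\gamma\in\Gamma$ with $\dist(\tilde q,\gamma\tilde p_0)\le D$, whence $B_1(\tilde q)\subseteq B_{D+1}(\gamma\tilde p_0)$ and, by Bishop--Gromov together with the $\Gamma$‑invariance of $\mathrm{Vol}_{\tilde g}$,
\begin{equation*}
\mathrm{Vol}_{\tilde g}(B_1(\tilde q))\ \le\ \frac{v(D+1)}{v(1)}\,\mathrm{Vol}_{\tilde g}(B_1(\gamma\tilde p_0))\ =\ \frac{v(D+1)}{v(1)}\,\mathrm{Vol}_{\tilde g}(B_1(\tilde p_0))\ <\ C(n,D)\,\eps,
\end{equation*}
where $v(r)$ is the volume of an $r$‑ball in the $n$‑dimensional space form of curvature $-1$. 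Thus \emph{every} unit ball of $\tilde M$ has volume $<C(n,D)\,\eps$.

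For the topological input, recall that a manifold homeomorphic to a closed aspherical one is itself aspherical; hence $\tilde M$ is a contractible $n$‑manifold, and $M$ is \emph{essential} (its classifying map $M\to K(\Gamma,1)$ is a homotopy equivalence). By Gromov's theorem this forces $\tilde M$ to have macroscopic dimension $n$: there is no proper continuous map from $\tilde M$ onto a simplicial complex of dimension $\le n-1$ with uniformly bounded fibres (concretely, the fundamental class at infinity of $\tilde M$ cannot be pushed into a subset of topological dimension $<n$).

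The remaining --- and genuinely hard --- ingredient is the quantitative heart of Gromov's filling theory: there exists $\delta(n)>0$ such that any complete Riemannian $n$‑manifold with $\Ric\ge-(n-1)$ all of whose unit balls have volume $<\delta(n)$ has macroscopic dimension $\le n-1$. The proof of this passes through a maximal $1$‑separated net $\{x_i\}$: Bishop--Gromov packing makes $\{B_1(x_i)\}$ a covering of bounded multiplicity, the volume smallness bounds the number of $x_j$ lying within a fixed distance of each $x_i$, and an inductive collapsing of the nerve of this covering (Gromov's filling‑radius construction) produces the asserted map onto an $(n-1)$‑complex. Granting this, we fix $\eps:=\delta(n)/C(n,D)$: by the first paragraph every unit ball of $\tilde M$ then has volume $<\delta(n)$, so $\tilde M$ has macroscopic dimension $\le n-1$, contradicting the second paragraph. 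Hence $\mathrm{Vol}_{\tilde g}(B_1(\tilde p))\ge\eps(n,D)=:c(n,D)$ for every $\tilde p\in\tilde M$, as desired. The main obstacle is precisely this last step: controlling the local combinatorics of the covering under only a Ricci lower bound and reducing it to a map onto a lower‑dimensional complex; the first two paragraphs are routine Bishop--Gromov packing and the standard fact that aspherical manifolds are essential.
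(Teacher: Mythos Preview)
The paper does not prove this theorem; it is quoted from \cite{Gromovfilling,Gromovvolume} as a black box, and your proposal is an outline of precisely Gromov's argument from those references. The sketch is correct at this level of detail, with one caveat on the phrasing of the hard step: stated for \emph{arbitrary} complete manifolds with uniformly small unit balls, the macroscopic-dimension drop is a considerably deeper (and later) assertion than what is needed here, whereas Gromov's original argument uses the cocompact $\Gamma$-action --- equivalently, works on the closed base $M$ --- to push the classifying map $M\to K(\Gamma,1)$ into the $(n-1)$-skeleton of a nerve and contradict essentiality directly; since you only invoke the lemma for $\tilde M$ with its deck action this is harmless, but the black box should be phrased in that restricted form if one is citing only \cite{Gromovfilling,Gromovvolume}.
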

\begin{remark}
	We also refer the reader to \cite{Guth}, where the lower Ricci assumption was dropped at the expense of a weaker volume conclusion.
\end{remark}

\vspace{.3cm}

\subsection{Lattice Covers of Riemannian Tori}

\begin{definition}[$\ell^1$ Word Norm]
	Let $(M^n,g)$ be homeomorphic to $T^n$, and let $\gamma_1, \ldots , \gamma_n$ a family of generators of $\Gamma\equiv \pi_1(M)$.  We define the associated $\ell^1$ norm on $\Gamma$ by
\begin{equation}
	\| \gamma \|_1 := \sum_{i=1}^n |\alpha_1|\, , 
\end{equation}
where $\gamma=\gamma_1^{\alpha_1} \circ \ldots \circ \gamma_n^{\alpha_n}$.
\end{definition}

Theorem \ref{thm:less dense} below is from \cite[Theorem 4.2]{KS20}.

\begin{theorem}[Uniform Lattice Coverings \cite{KS20}]\label{thm:less dense}
Let $(M^n,g)$ be homeomorphic to $T^n$ with $\diam(M) \le D$. Let $(\tilde M, \tilde g)$ be the universal cover where $\Gamma\equiv \pi_1(M)$ acts by isometries. Then there exists $\Lambda\leq \Gamma$ with $\Lambda \equiv \mathbb{Z}^n$ and 
	\begin{itemize}
		\item[(i)] $\diam(\tilde M/\Lambda) \le 6^n D$;

		\item[(ii)] There exists a family of generators $\{\gamma_1, \ldots , \gamma_n\}$ of $\Lambda$ such that
		\begin{equation}
			A(n)^{-1} D \,\| \gamma \|_1 \le \dist_{\tilde M}(x,\gamma(x)) \le A(n) D\, \| \gamma \|_1 \, , \quad \forall \, \gamma \in \Lambda\, , x\in \tilde M \, .
		\end{equation}
	\end{itemize}
\end{theorem}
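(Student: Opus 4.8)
The plan is to transport everything to the fundamental group. Since $M\cong T^n$ we have $\Gamma:=\pi_1(M)\cong\mathbb{Z}^n$, and it acts on $\tilde M$ by isometries, properly discontinuously and cocompactly with $\diam(\tilde M/\Gamma)=\diam(M)\le D$. Fix $\tilde p\in\tilde M$, write $|\gamma|:=\dist_{\tilde M}(\tilde p,\gamma\tilde p)$, and let $F:=\{x:\dist_{\tilde M}(x,\tilde p)\le\dist_{\tilde M}(x,\gamma\tilde p)\ \forall\gamma\in\Gamma\}$ be the Dirichlet domain, so that $F\subseteq\bar B_D(\tilde p)$ and $\Gamma F=\tilde M$. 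Define the stable norm $\ell(\gamma):=\lim_{k\to\infty}|\gamma^k|/k=\inf_k|\gamma^k|/k$; the limit exists by subadditivity (using $(\gamma\delta)^k=\gamma^k\delta^k$ since $\Gamma$ is abelian), $\ell$ is symmetric and subadditive, it is positive-definite because $\Gamma$ is torsion-free and the action is proper, and it extends to a norm on $\Gamma\otimes_\mathbb{Z}\mathbb{R}\cong\mathbb{R}^n$. The elementary half of the key comparison is that $\ell$ bounds displacement from below at every point: for $x=\delta y$ with $\delta\in\Gamma$, $y\in F$, commutativity gives $\dist_{\tilde M}(x,\gamma x)=\dist_{\tilde M}(y,\gamma y)\ge\tfrac1k\dist_{\tilde M}(y,\gamma^k y)\ge\tfrac1k(|\gamma^k|-2D)$, hence $\dist_{\tilde M}(x,\gamma x)\ge\ell(\gamma)$ for all $x\in\tilde M$.

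The non-elementary half — and the step I expect to be the main obstacle — is a matching upper bound: a quantitative version of Burago's theorem on periodic metrics, giving a constant $C(n)$ (depending only on $n$) with $\dist_{\tilde M}(x,\gamma x)\le C(n)\big(\ell(\gamma)+D\big)$ for all $x\in\tilde M$, $\gamma\in\Gamma$. In other words, the $\Gamma$-periodic length metric on $\tilde M\cong\mathbb{R}^n$ stays within controlled distance of its stable norm, with the deviation controlled by $n$ and the ``size'' $D$ of the period. This is genuinely needed: after the rescaling below, the generators of $\Lambda$ may move $\tilde p$ by an arbitrarily large amount, so the naive triangle inequality controls $\dist_{\tilde M}(x,\gamma x)$ from above by nothing useful, whereas the above estimate controls it by the stable norm, which is exactly the quantity the lattice reduction will make comparable to the word norm. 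I would also record the standard \v{S}varc--Milnor fact that $\{\gamma:|\gamma|\le 3D\}$ generates $\Gamma$ (subdivide a path from $\tilde p$ to $\gamma\tilde p$ into arcs of length $\le D$ and snap the endpoints to the orbit), so that this set contains $n$ linearly independent elements and hence the $n$-th successive minimum of $(\Gamma,\ell)$ is at most $3D$.

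Next I would invoke reduction theory: choose a Minkowski-reduced basis $f_1,\dots,f_n$ of the lattice $\Gamma$ with respect to the norm $\ell$, so that $\ell(f_i)\le C(n)D$ (from the bound on the successive minima) and the quasi-orthogonality estimate $\ell\big(\sum_i a_i f_i\big)\ge c(n)\,\max_i|a_i|\,\ell(f_i)$ holds, with $c(n)>0$ depending only on $n$. Now rescale each basis vector to have stable norm $\approx D$: set $k_i:=\lceil D/\ell(f_i)\rceil\ge 1$, $g_i:=f_i^{k_i}$, and $\Lambda:=\langle g_1,\dots,g_n\rangle$. Then $\Lambda\cong\mathbb{Z}^n$ is a finite-index subgroup of $\Gamma$, $\ell(g_i)=k_i\ell(f_i)\in[D,\,C(n)D]$, and since $g_i$ is just $f_i$ scaled by $k_i$, the quasi-orthogonality is inherited: $\ell\big(\sum_i a_i g_i\big)\ge c(n)\,\max_i|a_i|\,\ell(g_i)$. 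For $\gamma=g_1^{a_1}\cdots g_n^{a_n}$ one has $\|\gamma\|_1=\sum_i|a_i|$ and therefore $\tfrac{c(n)}{n}D\,\|\gamma\|_1\le\ell(\gamma)\le C(n)D\,\|\gamma\|_1$. Combining this with the two comparisons of the previous paragraphs — $\ell(\gamma)\le\dist_{\tilde M}(x,\gamma x)\le C(n)(\ell(\gamma)+D)$ and $\|\gamma\|_1\ge 1$ for $\gamma\ne e$ — yields statement (ii) with $A(n):=\max\{C'(n),\,n/c(n)\}$.

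Finally, for (i): given $[x],[y]\in\tilde M/\Lambda$, the density of the $\Gamma$-orbit gives $\delta\in\Gamma$ with $\dist_{\tilde M}(x,\delta y)\le D$; since the $\ell$-covering radius of $\Lambda$ is at most $\tfrac12\sum_i\ell(g_i)\le\tfrac{C(n)n}{2}D$, there is $\lambda\in\Lambda$ with $\ell(\delta\lambda^{-1})\le\tfrac{C(n)n}{2}D$, and then, using commutativity and the upper comparison, $\dist_{\tilde M}(x,\lambda y)\le\dist_{\tilde M}(x,\delta y)+\sup_z\dist_{\tilde M}\big(z,(\delta\lambda^{-1})z\big)\le D+C(n)\big(\tfrac{C(n)n}{2}D+D\big)$. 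Hence $\dist_{\tilde M/\Lambda}([x],[y])$ is bounded by a dimensional constant times $D$, and once the constants accumulated along the way are tracked this is at most $6^n D$ — the exponential bound being deliberately wasteful so as to absorb all of them. Thus the only substantial step is the Burago-type comparison in the second paragraph; the lower comparison and the lattice-reduction-and-rescaling are routine, and the torus hypothesis enters precisely through $\Gamma\cong\mathbb{Z}^n$: torsion-freeness, to make $\ell$ a norm, and finite rank, so that reduction theory applies.
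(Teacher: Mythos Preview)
The paper does not give its own proof of this statement; it is quoted from \cite[Theorem~4.2]{KS20}, so there is nothing in the present paper to compare your argument against. That said, your outline is a sound strategy. The stable-norm lower bound $\dist_{\tilde M}(x,\gamma x)\ge\ell(\gamma)$, the \v{S}varc--Milnor generation of $\Gamma$ by elements of length $\le 3D$, the Minkowski reduction, and the rescaling $g_i=f_i^{k_i}$ are all correct as written, and you rightly isolate the quantitative Burago inequality $\dist_{\tilde M}(x,\gamma x)\le C(n)\big(\ell(\gamma)+D\big)$ as the one nontrivial input; this is indeed a theorem (Burago's bounded-distance result for $\mathbb{Z}^n$-periodic metrics, with the constant scaling linearly in the diameter of a fundamental domain), so there is no genuine gap in your plan.

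Two minor points. First, your route produces (i) and (ii) with \emph{some} dimensional constants, but the accumulated Burago and Minkowski-reduction constants will almost certainly not simplify to the clean $6^nD$ of the statement; the argument in \cite{KS20} builds the sublattice $\Lambda$ by a direct inductive short-basis construction (in the spirit of Gromov's short generators), and the factor of $6$ per dimension is what yields the stated bound. For the application in the present paper only the existence of dimensional constants is used, so this discrepancy is harmless. Second, in your diameter estimate for (i) you apply the Burago upper bound to $\delta\lambda^{-1}\in\Gamma$ rather than to an element of $\Lambda$; this is legitimate since the stable norm and Burago's inequality are defined on all of $\Gamma$, but it is worth making that explicit.
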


\vspace{.3cm}

\subsection{Lie Group actions on Euclidean Space and Tori}

The statement below is a consequence of Newman's theorem. We refer to \cite[Chapter III, Section 9]{Bredon} for a thorough discussion.

\begin{theorem}\label{thm:boundedorbitstrivial}
Let $G$ be a compact Lie group (not necessarily connected). Assume that $G$ acts effectively by homeomorphisms on $\setR^n$. Then either $G=\{e\}$ is trivial or the orbits of $G$ cannot be uniformly bounded with respect to the Euclidean distance. 
\end{theorem}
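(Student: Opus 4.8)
The plan is to reduce to a nontrivial finite cyclic group and then combine a one-point compactification with a preliminary rescaling, so that the quantitative form of Newman's theorem on $S^n$ produces a contradiction. We may assume $n\ge 1$, the case $n=0$ being trivial. \emph{Reduction to $\setZ/p\setZ$.} Suppose $G\neq\{e\}$. If $\dim G\ge 1$, the closure of a nontrivial one-parameter subgroup is a positive-dimensional torus, so $G$ contains a circle and hence a subgroup isomorphic to $\setZ/p\setZ$ for every prime $p$; if $\dim G=0$, then $G$ is a nontrivial finite group and Cauchy's theorem furnishes a subgroup isomorphic to $\setZ/p\setZ$ for some prime $p$. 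Restricting the action, we obtain an effective action of $H:=\setZ/p\setZ$ on $\setR^n$ by homeomorphisms whose orbits are still uniformly bounded, since $H$-orbits are contained in $G$-orbits. Thus it suffices to show that such an action of $H$ cannot exist.

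\emph{Compactification after rescaling.} Assume for contradiction that $H$ acts effectively on $\setR^n$ with all orbits of Euclidean diameter $\le R$. Taking the identity as a basepoint in each orbit, every $h\in H$ satisfies $|h(x)-x|\le R$ for all $x\in\setR^n$. Conjugating the action by the dilation $x\mapsto\lambda^{-1}x$ replaces $R$ by $R/\lambda$ while preserving effectiveness, so we may assume from the outset that $R$ is as small as we please. Since $|h(x)|\ge |x|-R\to\infty$ as $|x|\to\infty$, each $h$ extends to a homeomorphism $\tilde h$ of the one-point compactification $S^n=\setR^n\cup\{\infty\}$ with $\tilde h(\infty)=\infty$; because such an extension is uniquely determined by continuity and density of $\setR^n$, we have $\widetilde{h\circ k}=\tilde h\circ\tilde k$ and $\widetilde{h^{-1}}=\tilde h^{-1}$, so $H\ni h\mapsto \tilde h$ is a homomorphism into $\mathrm{Homeo}(S^n)$, and the extended action is still effective because it is effective on the dense set $\setR^n$.

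\emph{Newman's theorem.} Equip $S^n$ with the round metric for which stereographic projection from $\infty$ is conformal with linear factor $\le 2$; then any two points of $\setR^n$ at Euclidean distance $\le R$ are at round distance $\le 2R$, so every $H$-orbit contained in $\setR^n$ has round diameter $\le 2R$, while the orbit of $\infty$ is a single point. By Newman's theorem (see \cite[Chapter III, Section 9]{Bredon}) there is a constant $\eps_n>0$, depending only on $(S^n,\mathrm{round})$, such that no nontrivial finite group acts effectively on $S^n$ with all orbits of diameter $<\eps_n$. Having arranged $2R<\eps_n$ in the previous step, the extended effective action of the nontrivial group $H$ on $S^n$ contradicts this, which completes the proof. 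The key point — and the only place where more than routine bookkeeping is needed — is that the one-point compactification \emph{alone} does not suffice, since a nontrivial $\setZ/p\setZ$ can fix a point of $S^n$; the preliminary rescaling is what forces \emph{all} orbits, not merely those accumulating at $\infty$, below Newman's universal threshold, so that effectiveness on $S^n$ becomes impossible.
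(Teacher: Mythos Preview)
Your proof is correct and follows exactly the route the paper indicates: the paper gives no argument of its own but simply records that the statement is a consequence of Newman's theorem, citing \cite[Chapter III, Section 9]{Bredon}. Your write-up supplies the details that citation leaves implicit --- the reduction to a nontrivial $\setZ/p\setZ$, the rescaling, and the one-point compactification to $S^n$ --- and you correctly isolate the rescaling as the step needed to pass from ``uniformly bounded orbits'' to ``all orbits below Newman's universal threshold on $S^n$''.
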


The following two statements, originally due to \cite{ConnerMontgomery,ConnerRaymond}, are stated as in \cite[Chapter IV, Section 9]{Bredon}.

\begin{theorem}[\cite{ConnerMontgomery,ConnerRaymond}]\label{thm:actionsontori}
Let $G$ be a compact connected Lie group acting effectively by homeomorphisms on a torus $T^n$. Then $G$ is a torus and the action is free. Moreover, the projection to the orbit space is a trivial principal $G$-fibration.
\end{theorem}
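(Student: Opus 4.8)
The plan is to reduce the statement to two assertions — that $G$ is a torus and that it acts freely — and then to deduce the triviality of the bundle from standard facts. The one device that does the real work is the following combination of covering‑space theory with Newman's theorem in the form of \autoref{thm:boundedorbitstrivial}, which I will call the lifting device: \emph{if $H\leq G$ is a nontrivial compact connected subgroup whose orbit map $H\to T^n$ (based at any point) is zero on $\pi_1$, then one reaches a contradiction.} Indeed, the vanishing of $\pi_1(H)\to\pi_1(T^n)$ allows the $H$-action on $T^n$ to be lifted to an $H$-action on the universal cover $\mathbb{R}^n$; this lift is effective (an element acting trivially on $\mathbb{R}^n$ acts trivially on $T^n$), and it commutes with the deck group $\mathbb{Z}^n$ (for fixed $\gamma\in\mathbb{Z}^n$, the map $h\mapsto\tilde h\gamma\tilde h^{-1}\gamma^{-1}$ is continuous, $\mathbb{Z}^n$-valued, and vanishes at $e$, hence is identically trivial). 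Consequently, fixing a $\mathbb{Z}^n$-invariant proper metric on $\mathbb{R}^n$ (e.g. the lift of a Riemannian metric on $T^n$), the function $\tilde x\mapsto\mathrm{diam}(H\cdot\tilde x)$ is $\mathbb{Z}^n$-periodic and continuous, hence bounded; so the $H$-orbits are uniformly bounded, and \autoref{thm:boundedorbitstrivial} forces $H=\{e\}$.

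With this in hand, I would first show $G$ is a torus: if $G$ were not abelian, $[\mathfrak g,\mathfrak g]\neq 0$ would contain a subalgebra isomorphic to $\mathfrak{su}(2)$, hence $G$ would contain a closed subgroup isomorphic to $SU(2)$ or $SO(3)$; since $\pi_1$ of such a group is $0$ or $\mathbb{Z}/2$ and $\pi_1(T^n)=\mathbb{Z}^n$ is torsion free, the orbit map of that subgroup is zero on $\pi_1$, and the lifting device gives a contradiction. So $G=T^k$. Next, I would show the action is free. If some point $x$ had isotropy of positive dimension it would have a subcircle fixing $x$, whose orbit map based at $x$ is constant, hence zero on $\pi_1$; the lifting device again gives a contradiction. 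Hence all isotropy groups are finite.

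Excluding nontrivial \emph{finite} isotropy is the delicate point, and the main obstacle; this is where I would invoke Smith theory. Suppose $\mathbb{Z}/p\leq(T^k)_x$ for a prime $p$; choose a subcircle $S^1\leq T^k$ containing this $\mathbb{Z}/p$, and lift the $S^1$-action to an $\mathbb{R}$-action on $\mathbb{R}^n$. The order-$p$ element $\overline{1/p}\in S^1$ fixes $x$ (being in the unique order-$p$ subgroup of $S^1$), so it has a lift $\hat c$ fixing a chosen lift $\tilde x$ of $x$; write $\gamma_0\in\mathbb{Z}^n$ for the image of a generator of $\pi_1(S^1)$, which is nonzero since otherwise the lifting device applies to $S^1$. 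A short computation with the lifted $\mathbb{R}$-action shows $\hat c^p$ acts on $\mathbb{R}^n$ as the deck transformation $\gamma_0\eta^{-p}$ for a suitable $\eta\in\mathbb{Z}^n$; since $\hat c^p$ fixes $\tilde x$ and deck transformations are free, $\gamma_0=\eta^{p}$ and $\hat c^p=\mathrm{id}$. Thus $\langle\hat c\rangle\cong\mathbb{Z}/p$ acts effectively on $\mathbb{R}^n$, commuting with $\mathbb{Z}^n$, so $F:=\mathrm{Fix}(\hat c)$ is $\mathbb{Z}^n$-invariant with quotient $\bar F:=F/\mathbb{Z}^n$ the fixed set of the induced $\mathbb{Z}/p$-action on $T^n$. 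By Smith theory $F$ is $\mathbb{F}_p$-acyclic, hence connected, so $F\to\bar F$ is a connected $\mathbb{Z}^n$-cover and $H^*(\bar F;\mathbb{F}_p)\cong H^*(\mathbb{Z}^n;\mathbb{F}_p)\cong H^*(T^n;\mathbb{F}_p)$, which is nonzero in degree $n$. But Smith theory also makes $\bar F$ a compact $\mathbb{F}_p$-cohomology manifold, $\mathbb{F}_p$-orientable and of dimension $r<n$ (strictly, as the $\mathbb{Z}/p$-action is nontrivial), so Poincaré duality over $\mathbb{F}_p$ gives $H^n(\bar F;\mathbb{F}_p)=0$ — a contradiction. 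Hence the action is free. (Alternatively, freeness follows from the Conner–Raymond theory of injective toral actions on aspherical manifolds.)

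Finally, with $T^k$ acting freely by homeomorphisms on the topological manifold $T^n$, the projection $T^n\to B:=T^n/T^k$ is a principal $T^k$-bundle over a closed topological manifold $B$ of dimension $n-k$ (free actions of compact Lie groups admit local cross-sections). The homotopy exact sequence of $T^k\to T^n\to B$ together with injectivity of $\pi_1(T^k)\to\pi_1(T^n)$ (the lifting device applied to a primitive vector of the kernel) gives $\pi_j(B)=0$ for $j\geq 2$ and $\pi_1(B)\cong\mathbb{Z}^n/\mathbb{Z}^k$; thus $B$ is a closed aspherical manifold, so $\pi_1(B)$ has finite cohomological dimension, hence is torsion free and equal to $\mathbb{Z}^{n-k}$. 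Therefore $B$ is homotopy equivalent to $T^{n-k}$ and, by the topological rigidity of tori, homeomorphic to $T^{n-k}$. The bundle is then classified by a class in $H^2(T^{n-k};\mathbb{Z}^k)$, a torsion-free group; since $b_1(T^n)=n$, the $d_2$-differential in the Serre spectral sequence of $T^k\to T^n\to T^{n-k}$ must vanish, which forces that class to be $0$. Hence the bundle is trivial: $T^n\cong T^k\times T^{n-k}$ with $T^k$ acting by translation on the first factor, and in particular the projection to the orbit space is a trivial principal $T^k$-fibration.
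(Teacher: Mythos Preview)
The paper does not prove this theorem; it is quoted from \cite{ConnerMontgomery,ConnerRaymond} as stated in \cite[Chapter~IV, Section~9]{Bredon}, with no argument given. There is therefore no ``paper's own proof'' to compare against.

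That said, your reconstruction is a correct rendering of the classical approach. The lifting device combined with \autoref{thm:boundedorbitstrivial} is exactly how one shows $G$ must be abelian and rules out positive-dimensional isotropy; your Smith-theoretic exclusion of finite isotropy is one standard way to finish (and you correctly flag Conner--Raymond's injective-toral-action theory as the alternative). Two small points worth making explicit if you were to write this up: first, in the finite-isotropy step, the identification $\bar F=F/\setZ^n$ with the full fixed set of $\overline{1/p}$ on $T^n$ uses that if $\hat c(\tilde y)=\gamma\tilde y$ then $\gamma^p=e$, hence $\gamma=e$ by torsion-freeness of $\setZ^n$; second, the Cartan--Leray/spectral-sequence computation of $H^*(\bar F;\mathbb F_p)$ from the $\mathbb F_p$-acyclicity of $F$ needs the \v Cech framework in which Smith theory is stated, since $F$ is a priori only a closed subset of $\setR^n$. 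Your bundle-triviality argument via $d_2=0$ and torsion-freeness of $H^2(T^{n-k};\setZ^k)$ is clean and correct.
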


\vspace{.3cm}

\subsection{Free Actions on Noncollapsed Limits}

It is a well-known result in Riemannian Geometry that quotients with respect to free isometric group actions of smooth Riemannian manifolds are (smooth Riemannian) manifolds. For the proof of our main result, it is important to have a partial generalization of this statement for non-collapsed Gromov-Hausdorff limits of manifolds with sectional curvature uniformly bounded from below.\\

We recall that an Alexandrov space $(X,\dist_X)$ of dimension $n$ is said to be smoothable if it can be presented as a (non-collapsed) Gromov-Hausdorff limit of a sequence $(M^n_i,g_i)$ of smooth Riemannian manifolds with sectional curvature uniformly bounded from below, see for instance \cite{LebedevaPetrunin}. We will say that an Alexandrov space $(X,\dist_X)$ is locally smoothable if it is locally isometric to a smoothable Alexandrov space.

\begin{proposition}\label{prop:quotienttopman}
Let $(X,\dist_X)$ be a locally smoothable Alexandrov space.
If $G$ is a compact, connected Lie group acting freely by isometries on $(X,\dist_X)$, then $(X/G,\dist_{X/G})$ is a topological manifold.
\end{proposition}

\begin{proof}
The proof relies on the slicing theorem for Alexandrov spaces from \cite{HarveySearle}, and on the characterization of spaces of directions for noncollapsed smooth limits with lower sectional curvature bounds \cite{Kapovitchlimits}. 
\medskip

Let $x\in X$ and let us denote by $\Sigma_xX$ the space of directions at $x$.  In particular the tangent cone at $x$ is given by the cone space $C(\Sigma_x)$.  Let $S_x\subset \Sigma_xX$ be the space of unit directions tangent to the orbit $G\cdot x$ of $x$, and let $\nu_x \equiv \nu(S_x)\subset \Sigma_xX$ be the space of unit directions normal to $S_x$.  Namely,
\begin{equation}
\nu_x:=\large\{v\in \Sigma_xX\, :\, \dist_{\Sigma_xX}(v,w)=\frac{\diam(\Sigma_xX)}{2}\, ,\quad \text{for any $w\in S_x$}\large\}\, .
\end{equation}  

Let $k=\dim G$. By \cite{GalazSearle} we have that $S_x$ is {\it isometric} to a standard $(k-1)$-sphere. Moreover, the space of directions at $x$ is isometric to the join of $S_x$ with its normal space, namely $\Sigma_xX=S_x\ast \nu_x$. Equivalently, $\Sigma_xX$ is (isometrically) the iterated spherical suspension over $\nu_x$.  In particular, we can identify $\nu_x$ as the $k$-fold iterated space of directions.  By \cite[Corollary 1.4]{Kapovitchlimits} we then understand that $\nu_x X$ is a topological sphere.\\ 

At this stage we can apply the slice theorem for compact isometric group actions on Alexandrov spaces from \cite{HarveySearle} to conclude that for $r>0$ sufficiently small there exists a $G$-equivariant homeomorphism between $G\times C(\nu_x)$ and $B_r(G\cdot x)$, where the action of $G$ on $G\times C(\nu_x)$ is by left multiplication on the first factor and $C(\nu_x)$ denotes the cone over the normal space $\nu_x$.  Hence $\pi(x)\in X/G$ has a neighborhood homeomorphic to $C(\nu_x)$. As we proved above that $\nu_x$ is a topological sphere, this shows that $X/G$ is a topological manifold.
\end{proof}

\vspace{.5cm}

\section{Proof of the main result}\label{sec:proof}

Our goal in this section is to prove \autoref{thm:torusstabilitysecintro}, which we restate below for the ease of readability.

\begin{theorem}\label{thm:torusstabilitysec}
Let $(M^n_i, g_i) \xrightarrow{GH} (X^k,\dist_X)$ satisfy ${\rm Sec}_{g_i} \ge -1$,  ${\rm diam}\, (M_i)\le D$, and such that the $M^n_i$ are all homeomorphic to tori $T^n$.  Then $X^k$ is homeomorphic to the $k$-dimensional torus $T^k$ for some $0\le k\le n$.
\end{theorem}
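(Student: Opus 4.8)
The overall strategy is to transfer topological control from the universal covers, where Perelman stability applies because the covers are volume non-collapsed, down to the quotients via the $\pi_1$-action. First I would invoke \autoref{thm:covernoncollapsed} (aspherical noncollapsing: tori are aspherical) to get $\mathrm{Vol}_{\tilde g_i}(B_1(\tilde p_i)) \ge c(n,D)$ uniformly for the universal covers $(\tilde M_i^n,\tilde g_i)$. Then I would use \autoref{thm:less dense} to extract a lattice $\Lambda_i \le \Gamma_i = \pi_1(M_i)$ with $\Lambda_i \cong \mathbb{Z}^n$, $\diam(\tilde M_i/\Lambda_i) \le 6^n D$, and generators $\gamma_1^{(i)},\dots,\gamma_n^{(i)}$ with displacement comparable to $D\|\gamma\|_1$. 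Pass to the intermediate cover $N_i := \tilde M_i/\Lambda_i$: this is a compact manifold with $\mathrm{Sec} \ge -1$, bounded diameter, and — crucially — it is still volume non-collapsed (it has $\tilde M_i$ as universal cover, $\mathbb{Z}^n = \Lambda_i$ as fundamental group, hence is aspherical, and one applies \autoref{thm:covernoncollapsed} again, or simply notes the non-collapsing of $\tilde M_i$ descends). So up to a subsequence $N_i \xrightarrow{GH} Y$ with $Y$ a non-collapsed Alexandrov space, and by Perelman's stability theorem $N_i$ is homeomorphic to $Y$ for $i$ large; since each $N_i$ is homeomorphic to $T^n$ (it is a closed aspherical $n$-manifold with $\pi_1 = \mathbb{Z}^n$ — by the Borel-type rigidity / Bieberbach theory this forces $N_i \cong T^n$), $Y$ is homeomorphic to $T^n$.

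**Passing to the limit action.** Now $\Gamma_i/\Lambda_i$ is a finite group (since $\Lambda_i$ is finite-index in $\Gamma_i$, as both have the same "growth"/cover $\tilde M_i$ which covers the compact $M_i$) acting by isometries on $N_i$ with $M_i = N_i/(\Gamma_i/\Lambda_i)$. I would like to say the orders $|\Gamma_i/\Lambda_i|$ are uniformly bounded, but this may fail; instead the cleaner route is to pass directly to the equivariant Gromov-Hausdorff limit. Consider the whole tower: $(\tilde M_i, \Gamma_i) \to (N_i, \Gamma_i/\Lambda_i) \to (M_i)$. Using the displacement estimate in \autoref{thm:less dense}(ii) and Kapovitch's refinement \cite{Kapovitchlimits} of Perelman stability in the equivariant setting, extract a limit group $G$ acting on $Y \cong T^n$ with $Y/G$ isometric (or at least homeomorphic) to the Gromov-Hausdorff limit $X$ of $M_i$. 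The displacement bounds ensure the limit group $G$ is a Lie group acting with the identity component a torus; in fact one should identify $G^0$ as a $k'$-dimensional torus (for some $k'$) acting on $T^n \cong Y$, the collapse being exactly along the $G^0$-orbits, with $Y/G^0 \cong T^{n-k'}$ by \autoref{thm:actionsontori} — note that theorem gives that a compact connected Lie group acting effectively on a torus is itself a torus, acts freely, and the quotient fibration is trivial, so $Y/G^0$ is again a torus of dimension $n - k'$.

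**From quotient to conclusion.** It remains to handle the (possibly nontrivial) finite part $G/G^0$ acting on the torus $Y/G^0 \cong T^{n-k'}$, with $X \cong (Y/G^0)/(G/G^0)$. Here I would apply \autoref{thm:boundedorbitstrivial}: lifting to the universal cover $\mathbb{R}^{n-k'}$ of $T^{n-k'}$, the relevant deck-plus-$(G/G^0)$ action on $\mathbb{R}^{n-k'}$ still has the finite-part orbits uniformly bounded (they were orbits in a compact space and the displacement estimates survive in the limit), which forces the finite part to act trivially — so in fact $G/G^0$ already acts trivially, and $X \cong Y/G^0 \cong T^k$ with $k = n - k'$. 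Combined with \autoref{prop:quotienttopman} (to know $X = Y/G^0$ is a topological manifold when $G^0$ acts freely, which \autoref{thm:actionsontori} guarantees), and a final appeal to the topological classification of closed manifolds homotopy equivalent to a torus to upgrade "aspherical with $\pi_1 = \mathbb{Z}^k$" to "homeomorphic to $T^k$", we conclude.

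**Main obstacle.** The hard part will be making the equivariant limit argument rigorous: controlling the fundamental group actions $\Gamma_i \curvearrowright \tilde M_i$ in the limit uniformly enough that the limit group $G$ is an honest Lie group (not just a topological group), that the collapse is precisely along a torus subgroup, and that the finite part behaves. The displacement estimates of \autoref{thm:less dense}(ii) are the key tool here — they prevent the generators of $\Lambda_i$ from having displacement shrinking to zero relative to the scale $D$, so the limit of $\Lambda_i$ remains "$\mathbb{Z}^n$-like" and the only collapse comes from $\Gamma_i$-elements outside any fixed finite-displacement window — but turning this into a clean statement about $G$ acting on $Y \cong T^n$, and ruling out pathological finite parts via \autoref{thm:boundedorbitstrivial}, is where the real work lies.
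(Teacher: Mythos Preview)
Your outline through Step 2 matches the paper's proof almost exactly: build the intermediate covers $N_i=\tilde M_i/\Lambda_i$ via \autoref{thm:less dense}, use \autoref{thm:covernoncollapsed} to get noncollapsing, and apply Perelman stability so that the limit $Y$ is homeomorphic to $T^n$ (and hence $\tilde X\cong\setR^n$). One small remark: you do not need Borel rigidity to see $N_i\cong T^n$; it is simply a finite cover of $M_i\cong T^n$.

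The genuine gap is in your treatment of the finite part $G/G^0$. Your claim that $G/G^0$ acts \emph{trivially} on $Y/G^0$ is false in general. Take $n=k=1$: the limit group $\Gamma$ on $\tilde X\cong\setR$ is $\setZ$, the limit lattice $\Lambda$ can be $2\setZ$, so $G=\Gamma/\Lambda\cong\setZ_2$ acts on $Y\cong T^1$ by a half-rotation. Here $G^0=\{e\}$, $G/G^0=\setZ_2$ acts nontrivially, yet $X=T^1/\setZ_2\cong T^1$ is still a torus. Your justification (``orbits were bounded in a compact space, so lifted orbits are bounded'') fails precisely here: the lift of the half-rotation to $\setR$ is $x\mapsto x+\tfrac12$, and there is no finite subgroup of the lifted group at all, so \autoref{thm:boundedorbitstrivial} does not engage. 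In general, lifting a finite group action on $T^m$ to $\setR^m$ yields a group extension of $\setZ^m$ by the finite group, not a compact group with bounded orbits; a splitting need not exist.

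The paper avoids this by working \emph{upstairs} throughout Step~3. It takes the full equivariant limit $(\tilde M_i,\Gamma_i)\to(\tilde X,\Gamma)$ with $\Gamma$ an abelian Lie group acting on $\tilde X\cong\setR^n$. The crucial Claim~3.1 shows $\Gamma$ has no nontrivial compact subgroup: any such $\Gamma'$ commutes with $\Lambda\cong\setZ^n$ (because $\Gamma$ is abelian), so after transferring via the equivariant homeomorphism $\Phi:(\setR^n,\setZ^n)\to(\tilde X,\Lambda)$ the induced $\Gamma'$-action on $\setR^n$ is $\setZ^n$-periodic, hence has uniformly bounded orbits, hence is trivial by \autoref{thm:boundedorbitstrivial}. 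This is where the bounded-orbits theorem is actually applied, and the abelianness of $\Gamma$ is what makes the orbits bounded --- not compactness of the space downstairs. From there $\Gamma\cong\setZ^k\oplus\setR^{n-k}$ acting freely, and in Step~4 the paper deliberately chooses a new lattice $\Lambda'=\setZ^k\oplus\setZ^{n-k}$ so that $\Gamma/\Lambda'\cong T^{n-k}$ is \emph{connected}; then \autoref{thm:actionsontori} and \autoref{prop:quotienttopman} finish cleanly with no finite part to worry about. Your route through $G=\Gamma/\Lambda$ forces you to confront a possibly nontrivial $\pi_0(G)$, and you would still need the upstairs argument (or something equivalent) to control it.
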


Let us open by outlining the proof strategy.  Consider the sequence of universal covers $(\tilde M_i, \tilde g_i, \tilde p_i)$, where $\tilde p_i \in \tilde M_i$ is any given reference point. We let $\Gamma_i = \pi_1(M_i)$ be the group of deck transformations acting by isometries on $\tilde M_i$, i.e. $\tilde M_i/\Gamma_i = M_i$. The proof of \autoref{thm:torusstabilitysec} will be divided into four steps:

\begin{itemize}
	\item[(1)] We build subgroups $\Lambda_i < \Gamma_i$ with the property that $\tilde M_i/\Lambda_i$ is homeomorphic to a torus with bounded diameter $\diam(\tilde M_i/\Lambda_i) \le 6^nD$, and is uniformly volume non-collapsing ${\rm Vol}(\tilde M_i/\Lambda_i) \ge c(n,D)$.

	\item[(2)] We apply Perelman's stability theorem to deduce that any limit of $\tilde M_i/\Lambda_i$ is homeomorphic to a torus $T^n$.  This will be used to show that  $(\tilde{M}_i,\tilde{g}_i, \tilde p_i) \to (\tilde X, \dist_{\tilde X}, \tilde x)$ in the pGH topology, where $(\tilde{X},\dist_{\tilde{X}})$ is an Alexandrov space with ${\rm Sec}\ge -1$ which is homeomorphic to $\setR^n$. Moreover, the homeomorphism can be chosen to be equivariant with respect to suitable $\mathbb{Z}^n$ actions.
	
	\item[(3)] We study the equivariant pGH limit $(\tilde M_i, \tilde p_i, \Gamma_i) \to (\tilde X, \tilde x, \Gamma)$ and prove that $\Gamma\equiv \mathbb{Z}^k \oplus \mathbb{R}^{n-k}$ for some $k\le n$ with $\Gamma$ acting freely.
	The key point consists in showing that $\Gamma$ does not have compact subgroups, and will be achieved by means of  \autoref{thm:boundedorbitstrivial}. We remark that we will endow $\Gamma$ and all its subgroups with the compact-open topology, as it is customary in this setting.

	\item[(4)] In the last step, we are going to rely on \autoref{thm:actionsontori} to see that $(X,\dist)$ is a homotopy $k$-torus, and then conclude it is homeomorphic to $T^k$.
\end{itemize}

\vspace{.3cm}

\subsection{Step 1}\label{sec:Step1}
Let us apply \autoref{thm:less dense} to $(M_i^n, g_i)$ in order to find 
\begin{equation}
\Lambda_i = <\gamma^i_1, \ldots , \gamma^i_n> 
\end{equation}
such that $\Lambda_i \equiv \mathbb{Z}^n$, $\Lambda_i < \Gamma_i$, 
$\diam(\tilde M_i/\Lambda_i) \le 6^nD$, and 
\begin{equation}\label{z}
	A(n)^{-1} D \| \gamma \|_1 \le \dist_{\tilde M_i}(x,\gamma(x)) \le A(n) D \| \gamma \|_1 \, , \quad \forall \, \gamma \in \Lambda_i\, , x\in \tilde M_i \, .
\end{equation} 
We claim that
\begin{itemize}
	\item[(a)] $A(n)^{-1}D \le \dist_{\tilde M_i^n}(\tilde p_i, \gamma^i_a(\tilde p_i)) \le A(n)D$ for every $a=1,\ldots,n$.
	
	\item[(b)] ${\rm Vol}(\tilde M_i/\Lambda_i) \ge c(n,D)$.
\end{itemize}
The first item immediately follows from \eqref{z} and the identity $\|\gamma^i_a\|_1 = 1$, which is simply from the definition of $\ell^1$ norm.
To check (b), we observe that \eqref{z} implies 
\begin{equation}
	\dist_{\tilde M_i}(\tilde p_i, \gamma(\tilde p_i) ) \ge A(n)^{-1} D \, , \quad \text{for every $\gamma\in \Lambda_i \setminus\{0\}$}\, .
\end{equation}
In particular,  $\pi_i: \tilde M_i \to \tilde M_i/\Lambda_i$ restricts to an isometry on the ball $B_{A(n)^{-1}D/2}(\tilde{p}_i)$. By \autoref{thm:covernoncollapsed} and Bishop-Gromov inequality, the balls $B_{A(n)^{-1}D/2}(\tilde{p}_i)$ are uniformly non-collapsed. 

\vspace{.3cm}

\subsection{Step 2}\label{sec:step2}

The spaces $(\tilde M_i/\Lambda_i, \tilde g_i)$ have uniformly bounded diameters and they are volume noncollapsing. Hence, up to the extraction of a subsequence, we can assume that $(\tilde M_i, \tilde g_i, \tilde p_i, \Lambda_i) \to (\tilde X, \dist_{\tilde X}, \tilde x, \Lambda)$ in the equivariant pGH topology, and 
$(\tilde M_i/\Lambda_i, \tilde g_i) \to (\tilde X/\Lambda,\dist_{\tilde X/\Lambda})$ in the GH topology, where $(\tilde X/\Lambda,\dist_{\tilde X/\Lambda})$ is $n$-dimensional Alexandrov space with ${\rm Sec}\ge -1$.\\

Recall now that our base assumption is that the $M^n_i$ are homeomorphic tori, and consequently $\tilde M_i/\Lambda_i$ are homeomorphic tori.  We then have by Perelman's stability theorem (see \cite{Perelmanstability,Perelmanstabilitysimpl} and the exposition in \cite{Kapovitchperelman}), that there exists a homeomorphism $\phi:T^n\to \tilde X/\Lambda$.\\

By (a) in \autoref{sec:Step1}, it is elementary to check that $\Lambda \equiv \mathbb{Z}^n$, and it acts freely on $\tilde X$. In particular, $\pi: \tilde{X} \to \tilde X/\Lambda$ is a covering map with covering group $\mathbb{Z}^n$.  On the other hand $\tilde X/\Lambda$ is homeomorphic to $T^n$, hence the Galois correspondence for covering spaces implies that $\tilde{X}$ is the universal cover of $\tilde X/\Lambda$.
Therefore we can lift the homeomorphism $\phi:T^n\to \tilde X/\Lambda$ to an equivariant homeomorphism $\Phi: (\setR^n, \mathbb{Z}^n)\to (\tilde{X},\Lambda)$, where $\mathbb{Z}^n$ is the standard integer lattice on $\setR^n$.

\subsection{Step 3}
Up to extracting another subsequence we have the convergence $(\tilde{M}_i, \tilde g_i,\tilde{p}_i,\Gamma_i) \to (\tilde{X}, \dist_{\tilde X} ,\tilde{x},\Gamma)$ in the equivariant pGH topology, where $\Gamma$ is an abelian Lie group acting by isometries on $\tilde{X}$.
We show that $\Gamma \equiv \mathbb{Z}^{k}\oplus\setR^{n-k}$,  and that $\Gamma$ acts freely on $\tilde X$.
We proceed in three steps.\\

	{\bf Claim 3.1:} There are no non-trivial compact subgroups $\Gamma'<\Gamma$.\\

	Let us take $\Gamma'<\Gamma$ to be a compact subgroup.  Let $\Phi:(\setR^n, \mathbb{Z}^n) \to (\tilde{X}, \Lambda)$ be the equivariant homeomorphism obtained at the end of Step 2.
	We induce a $\Gamma'$ action by homeomorphisms on $\setR^n$ via
	\begin{equation}
		\gamma'\cdot v:=\Phi^{-1}(\gamma'\cdot \Phi(v))\, , \quad
		\gamma'\in \Gamma'\, , \, \, v\in \setR^n\, .
	\end{equation}
	By the equivariance of $\Phi:(\setR^n, \mathbb{Z}^n) \to (\tilde{X}, \Lambda)$ and the fact that $\Lambda$ and $\Gamma'$ commute, the induced $\Gamma'$ action on $\setR^n$ commutes with the $\mathbb{Z}^n$ action.  Consequently its orbits are uniformly bounded with respect to any norm on $\setR^n$.  By \autoref{thm:boundedorbitstrivial} the induced $\Gamma'$ action is trivial and hence $\Gamma'=\{e\}$ as claimed.  $\qed$\\

	{\bf Claim 3.2:} The group $\Gamma$ acts freely on $\tilde X$.\\

	Let $x\in \tilde X$ and consider the isotropy subgroup $\Gamma_x:=\{\gamma\in \Gamma\, : \gamma(x) = x\}<\Gamma$. It is immediate to check that $\Gamma_x$ is compact (with respect to the compact-open topology). On the other hand, we have shown in Claim 3.1 that $\Gamma$ does not admit compact subgroups. $\qed$\\

	{\bf Claim 3.3:} $\Gamma \equiv \mathbb{Z}^{k} \oplus \mathbb{R}^{n-k}$ for some $k\in \{0,\dots,n\}$.\\

	It is well-known that abelian Lie groups split as
	\begin{equation}
		\Gamma \equiv \Gamma/\Gamma_0 \oplus \Gamma_0 \, ,
	\end{equation}
	where $\Gamma_0<\Gamma$ is the connected component of the identity, and $\Gamma/\Gamma_0 $ is discrete. 
	Moreover, $\Gamma_0$ must be isomorphic to $T^a \oplus \mathbb{R}^b$, where $a$ and $b$ are nonnegative integers, as it is a connected abelian Lie group. However, using Claim 3.1 we have that $\Gamma$ has no compact subgroups, and hence we have that $\Gamma_0 = \setR^b$.\\
	
    On the other hand let us consider the abelian group $\Gamma/\Gamma_0$.  Arguing as in Claim 3.2 we deduce that $\Gamma/\Gamma_0$ acts discretely and freely on $\tilde X/\Gamma_0$. Moreover, it is generated by isometries displacing any reference point less than $2D$.     
   Consequently, $\Gamma/\Gamma_0$ is  finitely generated, so that $\Gamma/\Gamma_0 = \setZ^c\oplus \bigoplus \setZ_{c_j}$.   Using Claim 3.1 we have that $\Gamma/\Gamma_0$ is torsion free, and hence $\Gamma/\Gamma_0 \equiv \mathbb{Z}^c$.\\ 
	
	Since $\Gamma \equiv \mathbb{Z}^c \oplus \mathbb{R}^b$ acts cocompactly and freely, we can extract a lattice $\Lambda' \equiv \mathbb{Z}^c \oplus \setZ^b < \mathbb{Z}^c \oplus \mathbb{R}^b \equiv \Gamma$ acting cocompactly on $\tilde X \cong \mathbb{R}^n$. The quotient $\tilde X/\Lambda'$ is a compact aspherical manifold with abelian fundamental group.   By a classical argument, see for instance \cite{Whitehead}, it is homotopy equivalent to $T^n$.  In particular $\setZ^n = \pi_1(T^n)=\setZ^c\oplus\setZ^b$ and hence $c + b = n$.  This finishes the proof of Claim 3.3 and hence Step 3. $\qed$\\

\subsection{Step 4} We are now in position to conclude the proof of \autoref{thm:torusstabilitysec}.

\medskip

Let $\Lambda'<\Gamma \equiv \mathbb{Z}^{k}\oplus \mathbb{R}^{n-k}$ be the sub-lattice from the end of Claim 3.3, so that $\Lambda' \equiv \mathbb{Z}^{k} \oplus \mathbb{Z}^{n-k}$ with  $\Gamma/\Lambda' \equiv T^{n-k}$.  As in Claim 3.3 we know that $\tilde X/\Lambda'$ is a homotopy torus, and additionally we know that $\tilde X/\Lambda'$ is a topological manifold as $\tilde X\equiv \setR^n$.  By \cite{HsiangWall} for dimensions greater or equal than $5$, \cite{FreedmanQuinn} in dimension $4$, the solution of the geometrization conjecture \cite{PerelmanPoincareI,PerelmanPoincareII,PerelmanPoincareIII} in dimension $3$, and the classification of surfaces in dimension $2$, we have that $\tilde X/\Lambda'$ is homeomorphic to $T^n$ \footnote{We could avoid this heavy use of machinery by instead letting $\Lambda'$ be a refinement of $\Lambda$ from Step 2, where we have already deduced that $\tilde X/\Lambda$ is a torus.  However, this argument appears again at the end of the proof as well, and the heavy machinery is not avoidable there.}.\\

Thus $\tilde X/\Lambda'$ is homeomorphic to the torus $T^n$ and there is an induced $\setR^{n-k}/\Lambda'\equiv T^{n-k}$ action by isometries.   The quotient space $(\tilde X/\Lambda')/T^{n-k}$ coincides with the limit space $X$.\\

By \autoref{thm:actionsontori}, this $T^{n-k}$ action is free and the projection to the quotient is a trivial principal $T^{n-k}$ fibration. In particular, $X\times T^{n-k}$ is homeomorphic to $\tilde X/\Lambda'\equiv T^n$.   
Since $\tilde{X}/\Lambda'$ is locally smoothable, $X$ is itself a topological manifold by \autoref{prop:quotienttopman}.

As $X\times T^{n-k}$ is homeomorphic to $T^n$, we show that $X$ is aspherical and $\pi_1(X)=\mathbb{Z}^{k}$: 
\begin{equation}
	\begin{split}
		&0 = \pi_j(T^n) = \pi_j(X \times T^{n-k}) = \pi_j(X) \oplus \pi_j(T^{n-k}) = \pi_j(X) \quad j\ge 2 \, ,
		\\
		&\mathbb{Z}^n = \pi_1(T^n) = \pi_1(X \times T^{n-k}) = \pi_1(X) \oplus \mathbb Z^{n-k}
		\implies \pi_1(X) = \mathbb Z^k \, .
	\end{split}
\end{equation}	
Thus, by \cite{Whitehead}, $X$ is a topological manifold which is homotopy equivalent to a $k$-torus.  Arguing as before, we have by \cite{HsiangWall} for dimensions greater or equal than $5$, \cite{FreedmanQuinn} in dimension $4$, the solution of the geometrization conjecture \cite{PerelmanPoincareI,PerelmanPoincareII,PerelmanPoincareIII} in dimension $3$, and the classification of surfaces in dimension $2$ that $X$ is homeomorphic to $T^k$.  \\

\subsection{Outline of proof of \autoref{thm:n=3}}\label{ss:Ricci_lower_bound_n=3}

We can follow the same approach as in \autoref{thm:torusstabilitysecintro} with two major modifications.  More precisely, given a sequence of tori $(M^3_i,g_i)$ with $\Ric_{g_i}\ge -2$ and $\diam(M_i) \le D$, we first find intermediate noncollapsing coverings $(\tilde M_i/\Lambda_i, \tilde g_i)$ as in Step 1 of proof of \autoref{thm:torusstabilitysecintro}. Notice that the tools required for the selection of these covers, namely \autoref{thm:covernoncollapsed} and  \autoref{thm:less dense} below, do not depend on the lower sectional curvature bound.

In Step 2 of the proof, we replace Perelman's stability theorem with \cite[Theorem 1.7]{Simon}.
This allows to conclude that the limit $(\tilde M_i, \tilde g_i, \tilde p_i, \Lambda_i) \xrightarrow{pGH} (\tilde X, \dist_{\tilde X}, \tilde x, \Lambda)$ is equivariantly homeomorphic to $(\mathbb{R}^3, \mathbb{Z}^3)$.

Step 3 does not require any change with respect to the case of lower sectional curvature bounds.

In Step 4, in order to infer that the quotient of a topological $T^3$ with respect to a free torus action by homeomorphisms is a topological manifold, we can rely on \cite{Bredon2} (see also \cite{Raymond}). The rest of the argument does not require any modification.\\

\subsection{Stability of tori in the Alexandrov setting}\label{subsec:counterAlexandrov}

In this section, we consider $(M^n_i,\dist_i) \to (X^k,\dist_X)$ where $(M_i^n,\dist_i)$ are $n$-dimensional Alexandrov spaces with ${\rm curv}\ge -1$ and $\diam (M_i) \le D$. 

As anticipated in the introduction, if $M_i^n$ are all homeomorphic to tori $T^n$, then $X^k$ is homotopy equivalent to $T^k$. This follows from the fact that $X^k \times T^{n-k}$ is homeomorphic to $T^n$, as a consequence of our proof of \autoref{thm:torusstabilitysecintro}, and that $X$ is a CW-complex.

\subsubsection{Failure of the topological stability}
We now discuss an example of a sequence $(T^5,\dist_i)\xrightarrow{GH} (Y^4,\dist)$ of Alexandrov spaces $(T^5,\dist_i)$ homeomorphic to the torus $T^5$ and converging under a uniform lower curvature bound to an Alexandrov space $(Y^4,\dist)$ which is not homeomorphic to $T^4$. The example was indicated to the authors by Vitali Kapovitch. Its effect is to show that \autoref{thm:torusstabilitysecintro} does not generalize to Alexandrov spaces. \\

The main idea for the construction is borrowed from \cite{Kapovitchlimits}, see in particular Example 1.5, Corollary 1.6 and its proof therein.\\

Let $\Sigma^3$ be the Poincar\'e homology sphere with the metric of constant curvature $1$. Recall that it can be constructed as a quotient of $S^3$ endowed with its standard metric of constant curvature $1$ with respect to a free isometric action of the icosahedral group.

The spherical suspension $S^1\Sigma^3$ over $\Sigma^3$  is a $4$-dimensional Alexandrov space with $\mathrm{curv}\ge 1$ which is homotopy equivalent to $S^4$. However, it is not homeomorphic to $S^4$. Indeed, it is not a topological manifold as the two suspension points are not manifold points.

The distance on $S^1\Sigma^3$ is induced by a smooth Riemannian metric with $\mathrm{curv}\ge 1$ away from the two suspension points. In particular, we can fix a non-singular point $x\in S^1\Sigma^3$ and find a small neighbourhood $U\ni x$ isometric to a smooth Riemannian manifold. Then we can perform the connected sum $S^1\Sigma^3\#T^4$ on $U$ with a distance $\dist$ so that $(S^1\Sigma^3\#T^4,\dist)$ has curvature bounded below by some $K\le 0$ in the Alexandrov sense and such that $S^1\Sigma^3\setminus U$ embeds isometrically in $S^1\Sigma^3\#T^4$.\\

Notice that $S^1\Sigma^3\#T^4$ is homotopy equivalent to $T^4$. However, it is not homeomorphic to $T^4$. Indeed, it is not a topological manifold.  On the other hand, we claim that $(S^1\Sigma^3\#T^4)\times T^1$ is homeomorphic to $T^5$.\\

In order to establish the claim, let us first notice that $(S^1\Sigma^3\#T^4)\times T^1$ is homotopy equivalent to $T^5$, because $S^1\Sigma^3\#T^4$ is homotopy equivalent to $T^4$, as remarked above.  Also note that the product metric on $(S^1\Sigma^3\#T^4)\times T^1_{\epsilon}$ has curvature bounded from below by $K$ for any $\epsilon>0$.  Now let us first see that $(S^1\Sigma^3\#T^4)\times T^1_{\epsilon}$ is a topological manifold.  At any (metrically) singular point $y\in (S^1\Sigma^3\#T^4)\times T^1_{\epsilon}$, the tangent cone is isometric to $\setR\times C(\Sigma^3)=C(S^1\Sigma^3)$. By Edwards' double suspension theorem (cf. \cite{Daverman}), $C(S^1\Sigma^3)$ is homeomorphic to $\setR^5$. Hence, by Perelman's conical neighbourhood theorem, the metrically singular points of $(S^1\Sigma^3\#T^4)\times T^1_{\epsilon}$ are also manifold points.  Therefore, $(S^1\Sigma^3\#T^4)\times T^1$ is a topological manifold which is homotopy equivalent to $T^5$. Hence by \cite{HsiangWall}, it is homeomorphic to $T^5$.  \\ 

The sequence $\left((S^1\Sigma^3\#T^4)\times T^1_{1/i},\dist_i\right)_{i\in\setN}$, where $\dist_i$ is the natural product distance, collapses to $\left((S^1\Sigma^3\#T^4),\dist\right)$ with curvature bounded from below by $K\le 0$ and diameter bounded from above.  However, $(S^1\Sigma^3\#T^4)\times T^1$ is homeomorphic to $T^5$, while $S^1\Sigma^3\#T^4$ is not homeomorphic to $T^4$.  $\qed$\\

\subsubsection{Stability in dimension $n\le 4$}

	\label{rm:optimaldimension}
We remark that $n=5$ is the lowest possible dimension for a counterexample to the full torus stability \autoref{thm:torusstabilitysecintro} in the Alexandrov case. We discuss here only the case $n=4$ and $k=3$. The other cases can be treated with some (simpler) variants of the argument below.

If $(T^4,\dist_i)\xrightarrow{GH}(X^3,\dist_X)$ are Alexandrov tori converging under a uniform lower curvature bound and a uniform upper diameter bound, borrowing the notation from the proof of the main theorem, we can find a non-collapsed limit $\tilde{X}^4/\Lambda'$ of intermediate coverings of $(T^4,\dist_i)$ such that $X^3=(\tilde{X}^4/\Lambda')/T^1$, with $T^1$ acting freely and by isometries.

The argument in the proof of \autoref{prop:quotienttopman} shows that all tangent cones of $\tilde{X}^4$ split a factor $\setR$ isometrically. Moreover, $\tilde{X}^4/\Lambda'$ is homeomorphic to $T^4$ by Perelman's stability theorem. This is sufficient to show that any tangent cone of $\tilde{X}^4$ is isometric to $\setR\times C(S^2)$, for some Alexandrov metric with curvature $\ge 1$ on $S^2$. Indeed, the only other option would be that the tangent cone is isometric to $\setR\times C(\mathbb{RP}^2)$ for some Alexandrov metric with curvature $\ge 1$ on $\mathbb{RP}^2$. However, this would be in contradiction with the topological regularity of $\tilde{X}^4$ by Perelman's conical neighbourhood theorem.

Following again the argument in the proof of \autoref{prop:quotienttopman}, it follows that the tangent cones of $(X^3,\dist_X)$ are all homeomorphic to $\setR^3$. Hence, by Perelman's conical neighbourhood theorem again, $X^3$ is a topological manifold. Combined with the fact that it is a homotopy $3$-torus, this is sufficient to show that $X^3$ is homeomorphic to $T^3$.

\end{document}